\numberwithin{equation}{section}
\theoremstyle{plain}
\newtheorem{thm}{Theorem}[section]
\theoremstyle{remark}
\theoremstyle{plain}
\newtheorem{prop}{Proposition}[section]
\newtheorem{coroll}{Corollary}[section]
\begin{document}

\begin{frontmatter}
\title{On the convergence of the Metropolis-Hastings Markov chains}
\runtitle{On the convergence of the M-H Markov chains}

\begin{aug}
\ead[label=u1,url]{http://arxiv.org/}
\author{\fnms{Dimiter} \snm{Tsvetkov}
\thanksref{a}
\ead[label=e1]{dimiter.tsvetkov@yahoo.com}}
\author{\fnms{Lyubomir} \snm{Hristov}
\thanksref{a}
\ead[label=e2]{lyubomir.hristov@gmail.com}}
\and
\author{\fnms{Ralitsa} \snm{Angelova-Slavova}
\thanksref{b}
\ead[label=e3]{ralitsa.slavova@yahoo.com}
}

\address[a]{
Department of "Mathematical Analysis and Applications" \\ Faculty of Mathematics and Informatics
\\ St. Cyril and St. Methodius University of Veliko Tarnovo
\printead{e1,e2}}

\address[b]{
Department of "Communication Systems and Technologies" \\ Vasil Levski National Military University at Veliko Tarnovo
\printead{e3}
}

\runauthor{D. Tsvetkov et al.}


\end{aug}

\begin{abstract}
In this paper we study Markov chains associated with the Metropolis-Hastings algorithm.
We consider conditions under which the sequence of the successive densities of such a chain converges to the
target density according to the total variation distance for any choice of the initial density.
In particular we prove that the positiveness of the proposal density is enough for the chain to converge.
The content of this work basically presents a stand alone proof that the reversibility along with the kernel positivity imply the convergence.
\end{abstract}

\begin{keyword}[class=AMS]
\kwd[Primary ]{60J05}
\kwd{65C05}
\kwd[; secondary ]{60J22}
\end{keyword}

\begin{keyword}
\kwd{Markov chain}
\kwd{Metropolis-Hastings algorithm}
\kwd{total variation distance}
\end{keyword}

\end{frontmatter}

\section{Introduction and main result}
The Metropolis-Hastings algorithm invented by Nicholas Metropolis at al. \cite{origin-2} and W. Keith Hastings \cite{origin-1} is one of the best recognized
Markov chain Monte Carlo (MCMC) techniques
in the statistical applications (see e.g. \cite{basic-1,book-13,advanced-1,book-14,book-9,basic-3,book-12,paper-1,book-16}).
Throughout this paper we shall assume that the following conditions are valid.

{\color{blue} $\mathcal{H}$:}
Let $(\mathbb{X},\mathcal{A},\lambda)$ be some measure space with a $\sigma$-finite measure $\lambda$.
Assume we are given a \textit{target} probability distribution on $(\mathbb{X},\mathcal{A})$ which is absolutely continuous with respect to $\lambda$
with density $\pi(\cdot):\mathbb{X}\rightarrow\mathbb{R}_{+}$ for which $\pi(x)>0$ for all $x\in\mathbb{X}$.
Assume also we are given an absolutely continuous with respect to $\lambda$ \textit{proposal} distribution
on $(\mathbb{X},\mathcal{A})$ which density $q(\cdot|x):\mathbb{X}\rightarrow\mathbb{R}_{+}$ is set conditionally to $x\in\mathbb{X}$.
It is assumed that $q(\cdot|\cdot):\mathbb{X}\times\mathbb{X}\rightarrow \mathbb{R}_{+}$
is jointly $\mathcal{A}\times\mathcal{A}$ measurable (see e.g. \cite{basic-2}). $\Box$

Note that {\color{blue} $\mathcal{H}$} brings entirely technical nature, providing the proper container for the further treatments.
The assumption that $\pi(x)>0$ for all $x\in\mathbb{X}$
offers also some technical facilities and does not represent a limitation of the generality in the present work purposes.

All the probability densities in this paper are considered with respect to the common reference measure $\lambda$.

The Metropolis-Hastings algorithm, as a MCMC algorithm, serves for sampling from the target distribution $\pi(\cdot)$ and consists of the following steps.
Generate first initial draw $x_{(0)}$. Let we know the current draw $x_{(n-1)}$.
To obtain the next draw $x_{(n)}$ one should generate a candidate $x_{\ast}\sim q(x|x_{(n-1)})$ and accept the candidate
with a probability $\alpha(x_{(n-1)},x_{\ast})$ taking $x_{(n)} = x_{\ast}$ or reject the candidate with a
probability $1-\alpha(x_{(n-1)},x_{\ast})$ and take $x_{(n)} = x_{(n-1)}$ where
\begin{equation*}
    \alpha(x,x') = \min \left( 1 , \frac{\pi(x')}{\pi(x)}  \frac{q(x|x')}{q(x'|x)} \right)  \text{for }  \pi(x)q(x'|x) > 0.
\end{equation*}
Following \cite{paper-1} we set $\alpha(x,x')=1$ for $q(x'|x) = 0$ to avoid ambiguity. All draws are taken from $\mathbb{X}$.
This scheme defines a transition kernel
\begin{multline}\label{e1-1}
    \kappa (x\rightarrow x') = \alpha(x,x') q(x'|x) + \delta(x-x')\int \left( 1 - \alpha(x,z) \right) q(z|x) \lambda(dz).
\end{multline}
where $\delta(\cdot)$ is the delta function.
The integral sign stands for the $\lambda$ integration over $\mathbb{X}$ (including where it is necessary the delta function rule).
The notation $\kappa(x\rightarrow x')$ stands for a function of two variables $(x,x')\in\mathbb{X}\times\mathbb{X}$
associated (by analogy to the discrete state space) with the conditional probability to move from state $x$ to state $x'$.
According to the assumptions for $\pi(\cdot)$ and $q(\cdot|\cdot)$ the kernel (\ref{e1-1}) is nonnegative function.
This kernel fulfills the normalizing condition $\int \kappa (x\rightarrow x')\lambda(dx') = 1$
but first of all it is well known that the kernel satisfies the \textit{detailed balance condition} (reversibility of the chain)
\begin{equation}\label{e1-2}
    \pi (x)\kappa(x\rightarrow x') = \pi (x')\kappa(x'\rightarrow x).
\end{equation}

From the detailed balance condition it follows that the target density is
\textit{invariant} (stationary) for the kernel, i.e. it holds $\pi(x') = \int \pi(x)\kappa(x\rightarrow x')\lambda(dx).$
The transition kernel (\ref{e1-1}) defines a Metropolis-Hastings Markov chain (shortly MH-chain) of $\mathbb{X}$-valued random variables $\left( X_{(n)}\right)$
according to the following rule. Define the initial random variable $X_{(0)}$ with some proper
density $f_{(0)}(\cdot):\mathbb{X}\rightarrow\mathbb{R}_{+}$.
For any next random variable $X_{(n)}$ the corresponding density $f_{(n)}(\cdot)$  is defined by the recurrent formula
\begin{equation*}
    f_{(n)}(x') = \int f_{(n-1)}(x)\kappa(x\rightarrow x')\lambda(dx), n=1,2,\ldots .
\end{equation*}

One of the main problems arise here is to establish conditions under which the sequence $(f_{(n)}(\cdot))$ converges to the
invariant density $\pi(\cdot)$. In the general case of stationary Markov chain usually proves that this
sequence converges with respect to the total variation distance $d_{TV}$, i.e. that
\begin{equation}\label{e1-3}
    \lim_{n\rightarrow\infty} d_{TV}(\mu[f_{(n)}], \mu[\pi])  = \lim_{n\rightarrow\infty} \frac{1}{2}\int | f_{(n)}(x) - \pi(x) | \lambda(dx) = 0
\end{equation}
under various specific assumptions (see e.g. \cite{book-1, basic-1, basic-2, paper-2, advanced-2, book-7, basic-3, paper-1}).
Here by $\mu[f]$ we denote the probability measure associated with the density $f(\cdot)$.

In this paper (Theorem \ref{t31}) we propose conditions under which (\ref{e1-3}) holds
but we follow a somewhat different approach
by means of the properly defined Hilbert space described for example in Stroock \cite{origin-3}.


\section{Some preliminaries and notes}
This study was motivated by the well-known and clearly proven result in the discrete case where the simple positivity of the transition matrix (or some its power)
guarantees the convergence of the corresponding Markov chain to the stationary distribution, i.e. the positiveness occurs the only constructive condition needed
for the convergence. So a natural question arises whether some proper positiveness condition is also sufficient for the convergence in the general state discrete
time Markov chains? The answer turns out to be positive in the case of the Markov chains, associated with the Metropolis-Hastings algorithm.

	The general state discrete time Markov chains convergence is well investigated (see e.g. \cite{book-1, basic-1, basic-2, paper-2, advanced-2, book-7, basic-3, paper-1})
and very common advanced results were achieved by using of some specific notions as reversibility, irreducibility and aperiodicity. In the Metropolis-Hastings Markov chains
we have two important particular advantages, sourced by the nature of the chain. Such a chain is always reversible and the target distribution stands
for the (known) stationary distribution. These facts simplify the environment of the proof scenario.

	The most famous convergence result for the MH-chains, formulated in easily verifiable conditions, is announced for example in \cite{basic-3} (Theorem 7.4 along with Corollary 7.5).
Therein is shown that the positivity of the proposal distribution
\begin{equation}\label{ppc}
    q(x|x')>0 \:\: for\: all \:\: (x,x') \in \mathbb{X}\times\mathbb{X},
\end{equation}
provides the irreducibility of the corresponding MH-chain and also if the algorithm admits the event $x_{(n)}=x_{(n-1)}$ with nonzero probability
then the MH-chain is aperiodic.
The second claim of Theorem 7.4 in \cite{basic-3} says that both irreducibility and aperiodicity imply the total variation convergence.
Also in \cite{basic-3} is pointed out the existence of classes of examples in which the event $x_{(n)}=x_{(n-1)}$ never occurs.

Concisely formulated our main practical result (Corollary \ref{coroll42})
states that if the proposal density $q(\cdot|\cdot)$ is positive then (\ref{e1-3}) holds
regardless from the shape of the initial density.
Therefore (\ref{ppc}) provides also the aperiodicity of the chain because it is a necessary condition for an irreducible chain to converge.

\section{The $L^2(\pi)$ structure}
Following Stroock \cite{origin-3} we shall consider the Hilbert space $L^2(\pi)$ with an inner product
\begin{equation*}
    \langle f,g\rangle_{\pi} = \int f(x) g(x) \pi(x) \lambda(dx).
\end{equation*}
The space $L^2(\pi)$ consists of the measurable functions $f(\cdot):\mathbb{X}\rightarrow\mathbb{\bar{R}}$ for which
\begin{equation*}
    \| f\|_{2,\pi} = \sqrt{\int |f(x)|^2 \pi(x)\lambda(dx)} < \infty
\end{equation*}
(see e.g. \cite{book-8,book-10,book-15}). Define the operator
\begin{multline}\label{e2-1}
    \mathcal{K}[f](x) = \int\kappa(x\rightarrow x')f(x')\lambda(dx') \\
    = \int\mathring{\kappa}(x\rightarrow x')f(x')\lambda(dx') + \phi(x)f(x)
\end{multline}
which is formally conjugate to the basic transition operator of the chain
\begin{multline}\label{e2-2}
    \mathcal{\hat{K}}[f](x') = \int f(x)\kappa(x\rightarrow x')\lambda(dx) \\
    = \int f(x)\mathring{\kappa}(x\rightarrow x')\lambda(dx) + \phi(x')f(x')
\end{multline}
where the sub-kernel $\mathring{\kappa}(\cdot \rightarrow \cdot):\mathbb{X}\times\mathbb{X}\rightarrow \mathbb{\bar{R}}_+$
\begin{equation*}
    \mathring{\kappa}(x\rightarrow x') = \min\left( q(x'|x),\frac{\pi(x')}{\pi(x)} q(x|x')\right)
\end{equation*}
is nonnegative $\mathcal{A}\times\mathcal{A}$ measurable function and the function $\phi(\cdot):\mathbb{X}\rightarrow \mathbb{\bar{R}}_+$
\begin{equation*}
    \phi(x) = \int \left( 1 - \alpha(x,z) \right) q(z|x) \lambda(dz)
\end{equation*}
is measurable with $0\leq \phi(x)\leq 1$ for $x\in\mathbb{X}$. Actually $\kappa(\cdot\rightarrow \cdot)$ stands for a transition kernel of the transition
operator $\mathcal{\hat{K}}$ and simply is a kernel of the conjugate operator $\mathcal{K}$.

Put $\kappa_1 (x\rightarrow x')=\kappa (x\rightarrow x')$ and compose formally the sequence of kernels
\begin{equation*}
    \kappa_n (x\rightarrow x') = \int \kappa_{n-1} (x\rightarrow z) \kappa_1 (z\rightarrow x') \lambda(dz), n = 2,3,\ldots,
\end{equation*}
which are just the transition kernels of the transition-like operators $\mathcal{\hat{K}}^n$ in a sense that
\begin{equation*}
    \mathcal{\hat{K}}^n[f](x') = \int f(x)\kappa_n (x\rightarrow x')\lambda(dx)
\end{equation*}
and the usual kernels of the operators $\mathcal{K}^n$, i.e.
\begin{equation*}
    \mathcal{K}^n[f](x) = \int\kappa_n (x\rightarrow x')f(x')\lambda(dx').
\end{equation*}
Put also $\mathring{\kappa}_1 (x\rightarrow x')=\mathring{\kappa} (x\rightarrow x')$ and compose the sub-kernels
\begin{equation}\label{e2-3}
    \mathring{\kappa}_n (x\rightarrow x') = \int \mathring{\kappa}_{n-1} (x\rightarrow z) \mathring{\kappa}_1 (z\rightarrow x') \lambda(dz), n = 2,3,\ldots.
\end{equation}
One can find by induction that
\begin{multline*}
    \mathcal{K}^n[f](x) = \int\kappa_n(x\rightarrow x')f(x')\lambda(dx') \\
    = \int\mathring{\kappa}_n(x\rightarrow x')f(x')\lambda(dx') + \int \chi_n (x\rightarrow x')f(x')\lambda(dx') +
    \phi^n(x)f(x)
\end{multline*}
where $\chi_n(\cdot \rightarrow \cdot):\mathbb{X}\times\mathbb{X}\rightarrow \mathbb{\bar{R}}_+$ is some nonnegative $\mathcal{A}\times\mathcal{A}$ measurable function.
One can verify that $\kappa_n (\cdot\rightarrow \cdot)$ also satisfies the detailed balance condition and the Chapman-Kolmogorov equation
\begin{equation*}
    \kappa_{m+n} (x\rightarrow x') = \int \kappa_{m} (x\rightarrow z) \kappa_{n} (z\rightarrow x') \lambda(dz), m = 1,2,\ldots, n = 1,2,\ldots,
\end{equation*}
and the same is true for the sub-kernel $\mathring{\kappa}_n (\cdot\rightarrow \cdot)$.

The proofs of the first two claims of the following proposition can be found substantially for example in \cite{ origin-3},
but here we propose our ones for the sake of completeness.

\begin{prop}\label{p21}
Suppose {\color{blue} $\mathcal{H}$} holds and let $f\in L^2(\pi)$.
Then the following assertions are valid for the operator defined in (\ref{e2-1}) . \newline
1) $\mathcal{K}[f]\in L^2(\pi)$ and also
\begin{equation}\label{e2-4}
    \| \mathcal{K}[f]\|_{2,\pi} \leq \| f \|_{2,\pi}.
\end{equation}
2) The operator $\mathcal{K}:L^2(\pi)\rightarrow L^2(\pi)$ is self-adjoint and for its norm we have
\begin{equation}\label{e2-5}
    \| \mathcal{K} \| \leq 1.
\end{equation}
3) Suppose that there exists an integer $n\geq 1$ such that
$\mathring{\kappa}_n (\cdot\rightarrow \cdot)>0$ a.e. $(\lambda\times\lambda)$ in $\mathbb{X}\times\mathbb{X}$
where $\mathring{\kappa}_n (\cdot\rightarrow \cdot)$ is a composite sub-kernel defined in (\ref{e2-3}).
Let also $h\in L^2(\pi)$ be a function for which $\mathcal{K}^{n}[h] = h$.
Then there exists a constant $\gamma$ such that $h(\cdot) = \gamma$ a.e. $(\lambda)$ in $\mathbb{X}$.
\end{prop}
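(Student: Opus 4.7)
The plan is to handle the three parts in order, leveraging detailed balance throughout and then extracting equality in a convexity inequality at the end.

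For part 1, I would apply Jensen's inequality (or equivalently Cauchy--Schwarz) to the probability measure $\kappa(x\to\cdot)\lambda$ on $\mathbb{X}$, which is a probability measure because of the normalization $\int\kappa(x\to x')\lambda(dx')=1$. This yields
\begin{equation*}
  |\mathcal{K}[f](x)|^2 \;\leq\; \int |f(x')|^2\,\kappa(x\to x')\,\lambda(dx').
\end{equation*}
Multiplying by $\pi(x)$, integrating in $x$, using Tonelli, and invoking the detailed balance identity $\pi(x)\kappa(x\to x')=\pi(x')\kappa(x'\to x)$ together with the normalization gives $\int\pi(x)\kappa(x\to x')\lambda(dx)=\pi(x')$, so $\|\mathcal{K}[f]\|_{2,\pi}^2 \leq \|f\|_{2,\pi}^2$.

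For part 2, self-adjointness is the direct computation
\begin{equation*}
  \langle \mathcal{K}[f],g\rangle_{\pi} = \iint f(x')g(x)\kappa(x\to x')\pi(x)\,\lambda(dx')\lambda(dx),
\end{equation*}
which by detailed balance equals the same expression with $\kappa(x'\to x)\pi(x')$ inserted, i.e.\ $\langle f,\mathcal{K}[g]\rangle_{\pi}$; Fubini is justified by the Cauchy--Schwarz estimate from part 1 applied to $|f|,|g|$. The norm bound $\|\mathcal{K}\|\leq 1$ is just (\ref{e2-4}).

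Part 3 is the substantive step, and the main obstacle is the delta mass at the diagonal in $\kappa$, which is why the sub-kernel $\mathring{\kappa}_n$ is introduced. First note that $\kappa_n(\cdot\to\cdot)$ satisfies the detailed balance condition (as stated after (\ref{e2-3})), so $\mathcal{K}^n$ is self-adjoint, is a contraction on $L^2(\pi)$, and has the representation given before the proposition, with transition kernel $\kappa_n(x\to x')$ whose $\lambda$-absolutely continuous part dominates $\mathring{\kappa}_n(x\to x')$. From $\mathcal{K}^n[h]=h$ and $\|\mathcal{K}^n[h]\|_{2,\pi}\leq\|h\|_{2,\pi}$, equality must hold throughout the inequality chain in part 1 applied to $\mathcal{K}^n$; in particular, for $\pi$-a.e.\ $x$,
\begin{equation*}
  \Bigl(\int h(x')\kappa_n(x\to x')\lambda(dx')\Bigr)^2 = \int h(x')^2\,\kappa_n(x\to x')\,\lambda(dx'),
\end{equation*}
which is the equality case in Jensen's inequality for the probability measure $\kappa_n(x\to\cdot)\lambda$. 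Hence $h(\cdot)$ is constant (equal to $h(x)$) on a set of full $\kappa_n(x\to\cdot)\lambda$-measure. Because the absolutely continuous part of $\kappa_n(x\to\cdot)$ dominates $\mathring{\kappa}_n(x\to\cdot)$, and by hypothesis $\mathring{\kappa}_n>0$ a.e.\ $(\lambda\times\lambda)$, Tonelli shows that for $\lambda$-a.e.\ $x$ the set $\{x': \mathring{\kappa}_n(x\to x')>0\}$ has $\lambda$-full measure. Combining: for $\lambda$-a.e.\ $x$, $h(x')=h(x)$ for $\lambda$-a.e.\ $x'$. A standard Fubini argument then forces $h$ to be $\lambda$-a.e.\ equal to a single constant $\gamma$, as required (finiteness of $\gamma$ comes from $h\in L^2(\pi)$ and $\pi$ being a probability density).
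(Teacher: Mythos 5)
Your proposal is correct and follows essentially the same route as the paper: the Jensen/Cauchy--Schwarz step for the probability measure $\kappa(x\to\cdot)\lambda$ is exactly the paper's completing-the-square identity, detailed balance plus Tonelli gives the contraction and self-adjointness, and your equality-case-in-Jensen argument for part 3 is the paper's vanishing of $\int\pi(x)\int\kappa_n(x\to x')(h(x')-h(x))^2\,\lambda(dx')\lambda(dx)$ combined with the positivity of $\mathring{\kappa}_n$ and of $\pi$. The only cosmetic difference is that the paper first treats bounded $f$ and passes to general $f\in L^2(\pi)$ by truncation and Fatou, whereas you apply the quadratic inequality to $|f|$ directly, which is equally valid since it holds for nonnegative measurable integrands without integrability assumptions.
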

\begin{proof}
1) Let $f\in L^2(\pi)$. Applying the Holder's inequality to the functions $|f|\sqrt\kappa$ and $\sqrt\kappa$ we find
\begin{equation*}
    \left| \mathcal{K}[f](x) \right|^2   \leq  \int \kappa(x\rightarrow x') |f(x')|^2 \lambda(dx').
\end{equation*}
Multiplying the latter with $\pi(x)$ and integrating over $\mathbb{X}$ we get
\begin{multline*}
    \| \mathcal{K}[f] \|_{2,\pi}^{2}
    \leq \int \left( \int \pi(x)\kappa(x\rightarrow x') |f(x')|^2 \lambda(dx') \right) \lambda(dx) \\
    = \int \left( \int \pi(x')\kappa(x'\rightarrow x) |f(x')|^2 \lambda(dx') \right) \lambda(dx) \\
     = \int \pi(x')|f(x')|^2 \lambda(dx') = \| f \|_{2,\pi}^{2}.
\end{multline*}
Here we use the detailed balance condition (\ref{e1-2}) and the Tonelli's theorem which allows
us to interchange the order of integration. Thus we prove simultaneously
the inequality (\ref{e2-4}) and the fact that $\mathcal{K}[f]\in L^2(\pi)$.

2) Let $f\in L^2(\pi)$ and $g\in L^2(\pi)$ and write by means of the Fubini's theorem and by the detailed balance condition
\begin{multline*}
    \langle\mathcal{K}[f],g\rangle_{\pi} = \int \left(\int\kappa(x\rightarrow x')f(x')\lambda(dx')\right) g(x) \pi(x) \lambda(dx) \\
    = \int \left(\int \pi(x) \kappa(x\rightarrow x')f(x')\lambda(dx')\right) g(x) \lambda(dx) \\
    = \int f(x') \left( \int \kappa(x'\rightarrow x) g(x) \lambda(dx) \right) \pi(x') \lambda(dx') = \langle f,\mathcal{K}[g]\rangle_{\pi}.
\end{multline*}
which proves that the operator $\mathcal{K}$ is self-adjoint.
The inequality (\ref{e2-5}) follows immediately from (\ref{e2-4}).

3) Write the identity
\begin{equation*}
    h^2(x') = h^2(x) + 2h(x)(h(x')-h(x)) + (h(x')-h(x))^2
\end{equation*}
multiply with $\kappa_{n}(x\rightarrow x')$ and integrate. Then we get
\begin{equation*}
    \int \kappa_{n}(x\rightarrow x') h^2(x') \lambda(dx') = h^2(x) + \int \kappa_{n}(x\rightarrow x') (h(x')-h(x))^2 \lambda(dx')
\end{equation*}
because
\begin{equation*}
    \int \kappa_{n}(x\rightarrow x') 2h(x)(h(x')-h(x)) \lambda(dx') = 2h(x)\left( \mathcal{K}^{n}[h](x) - h(x) \right) = 0.
\end{equation*}
Multiply with $\pi(x)$ and integrate. Then
\begin{multline}\label{e2-7}
    \int \left( \int \pi(x) \kappa_{n}(x\rightarrow x') h^2(x') \lambda(dx') \right) \lambda(dx) = \int \pi(x) h^2(x) \lambda(dx) \\
    + \int \pi(x) \left( \int \kappa_{n}(x\rightarrow x') (h(x')-h(x))^2 \lambda(dx') \right) \lambda(dx).
\end{multline}
It is easy to see that the left-hand side in (\ref{e2-7}) is equal to the first addend in the right-hand side. Therefore
\begin{equation*}
    \int \pi(x) \left( \int \kappa_{n}(x\rightarrow x') (h(x')-h(x))^2 \lambda(dx') \right) \lambda(dx) = 0
\end{equation*}
which implies immediately that also
\begin{equation*}
    \int \pi(x) \left( \int \mathring{\kappa}_{n}(x\rightarrow x') (h(x')-h(x))^2 \lambda(dx') \right) \lambda(dx) = 0.
\end{equation*}
Now the inequalities $\pi(\cdot)>0$ and
$\kappa_{n}(\cdot \rightarrow \cdot) > 0$ give that there exists a constant $\gamma$ such that $h(\cdot) = \gamma$ a.e. $(\lambda)$ in $\mathbb{X}$.
\end{proof}

\begin{prop}\label{p22}
Suppose {\color{blue} $\mathcal{H}$} holds and let $\mathcal{K}:L^2(\pi)\rightarrow L^2(\pi)$ be the operator defined above.
Then for any integer $\nu\geq 1$ the following assertions are valid. \newline
1) Every power $\mathcal{K}^{2\nu n}$, $n=1,2,\ldots$, is positive operator, i.e.
$\langle\mathcal{K}^{2\nu n}[h],h\rangle_{\pi}\geq 0$ for any $h\in L^2(\pi)$. \newline
2) The sequence $(\mathcal{K}^{2\nu n})$
is decreasing, i.e. $\langle\mathcal{K}^{2\nu n+2\nu}[h],h\rangle_{\pi}\leq \langle\mathcal{K}^{2\nu n}[h],h\rangle_{\pi}$
for any $h\in L^2(\pi)$, $n=1,2,\ldots$. \newline
3) All the operators $\left(\mathcal{K}^{2\nu n} - \mathcal{K}^{2\nu n+2\nu p}\right)$ for $n = 1,2,\ldots$ and $p = 1,2,\ldots$ are also positive.
\end{prop}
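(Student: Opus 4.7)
The plan is to exploit two facts already established in Proposition \ref{p21}: the self-adjointness of $\mathcal{K}$ and the contraction bound $\|\mathcal{K}\|\leq 1$. Since $\mathcal{K}$ is self-adjoint on $L^2(\pi)$, every power $\mathcal{K}^m$ is self-adjoint as well, and therefore any inner product of the form $\langle\mathcal{K}^{2m}[h],h\rangle_\pi$ can be rewritten as a squared norm by moving half of the powers to the other side of the inner product.

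For part 1), I would set $g=\mathcal{K}^{\nu n}[h]$ and simply observe
\begin{equation*}
    \langle\mathcal{K}^{2\nu n}[h],h\rangle_{\pi} = \langle\mathcal{K}^{\nu n}[h],\mathcal{K}^{\nu n}[h]\rangle_{\pi} = \|g\|_{2,\pi}^{2}\geq 0,
\end{equation*}
using self-adjointness $\nu n$ times. This gives positivity at once. For parts 2) and 3), the natural approach is to prove 3) first, since the monotonicity in 2) is the special case $p=1$. With the same substitution $g=\mathcal{K}^{\nu n}[h]$, I would write
\begin{equation*}
    \langle(\mathcal{K}^{2\nu n}-\mathcal{K}^{2\nu n+2\nu p})[h],h\rangle_{\pi} = \|g\|_{2,\pi}^{2} - \|\mathcal{K}^{\nu p}[g]\|_{2,\pi}^{2},
\end{equation*}
again moving halves of the powers across via self-adjointness. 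Iterating the contraction estimate (\ref{e2-4}) $\nu p$ times yields $\|\mathcal{K}^{\nu p}[g]\|_{2,\pi}\leq \|g\|_{2,\pi}$, and the displayed difference is nonnegative. Part 2) then follows by taking $p=1$.

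There is no real obstacle here; the proposition is essentially a formal consequence of Proposition \ref{p21}. The only point worth a line of care is the use of self-adjointness for iterated powers, namely the identity $\langle\mathcal{K}^{a+b}[h],h\rangle_{\pi}=\langle\mathcal{K}^{a}[h],\mathcal{K}^{b}[h]\rangle_{\pi}$, which follows by repeatedly applying the self-adjointness relation $\langle\mathcal{K}[u],v\rangle_{\pi}=\langle u,\mathcal{K}[v]\rangle_{\pi}$ established in part 2) of Proposition \ref{p21}. Once this is in place, everything reduces to the elementary observation that $\mathcal{K}$ is a contraction on $L^2(\pi)$.
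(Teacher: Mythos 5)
Your proof is correct and follows essentially the same route as the paper: self-adjointness turns $\langle\mathcal{K}^{2\nu n}[h],h\rangle_{\pi}$ into a squared norm, and the contraction bound $\|\mathcal{K}\|\leq 1$ gives the monotonicity; the only cosmetic difference is that you prove 3) directly and deduce 2) as the case $p=1$, whereas the paper proves 2) first and obtains 3) by iteration.
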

\begin{proof} 1) Let $h\in L^2(\pi)$. The operator $\mathcal{K}$ is self-adjoint therefore
\begin{equation*}
    \langle\mathcal{K}^{2\nu n}[h],h\rangle_{\pi} = \langle\mathcal{K}^{\nu n}[h],\mathcal{K}^{\nu n}[h]\rangle_{\pi} \geq 0.
\end{equation*}
2) We have $\|\mathcal{K}\|\leq 1$ therefore
\begin{multline*}
    \langle\mathcal{K}^{2\nu n+2\nu}[h],h\rangle_{\pi} = \langle\mathcal{K}^{\nu n+\nu}[h],\mathcal{K}^{\nu n+\nu}[h]\rangle_{\pi}  \\
    = \| \mathcal{K}^{\nu n+\nu}[h]\|_{2,\pi}^2 = \| \mathcal{K}^{\nu}\left[ \mathcal{K}^{\nu n}[h]\right] \|_{2,\pi}^2\leq  \| \mathcal{K}^{\nu n}[h]\|_{2,\pi}^2   \\
    = \langle\mathcal{K}^{\nu n}[h],\mathcal{K}^{\nu n}[h]\rangle_{\pi} = \langle\mathcal{K}^{2\nu n}[h],h\rangle_{\pi}.
\end{multline*}
3) The positiveness of the operator $\left(\mathcal{K}^{2\nu n} - \mathcal{K}^{2\nu n+2\nu p}\right)$ means that
\begin{equation*}
    \langle \left( \mathcal{K}^{2\nu n} - \mathcal{K}^{2\nu n+2\nu p}\right)[h],h \rangle_\pi \geq 0
\end{equation*}
for any $h\in L^2(\pi)$ that is equivalent to
\begin{equation*}
    \langle \mathcal{K}^{2\nu n}[h],h \rangle_\pi \geq \langle \mathcal{K}^{2\nu n+2\nu p}[h],h \rangle_\pi
\end{equation*}
which follows immediately from 2).
\end{proof}

Here we are at position to prove that the operator sequence $(\mathcal{K}^n)$ has a strong limit.
More precisely we are going to prove that for every $f\in L^2(\pi)$ there exists the limit
\begin{equation*}
    \lim_{n\rightarrow\infty} \mathcal{K}^n [f] = \langle f,\mathbf{1}\rangle_{\pi}\mathbf{1}
\end{equation*}
where $\mathbf{1}$ denotes the constant function which equals to one.

Further we shall need the following condition of positiveness.

{\color{blue} $\mathcal{H}_p$:}
Assume that there exists an integer $\nu\geq 1$ for which $\mathring{\kappa}_\nu (\cdot \rightarrow \cdot)>0$ a.e. $(\lambda\times\lambda)$ in $\mathbb{X}\times\mathbb{X}$. $\Box$

The condition {\color{blue} $\mathcal{H}_p$} is met for example when the proposal density is positive. Remember that
the target density is positive by condition {\color{blue} $\mathcal{H}$}.

\begin{thm}\label{t21} Suppose {\color{blue} $\mathcal{H}$} and {\color{blue} $\mathcal{H}_p$} hold. Then for every $f\in L^2(\pi)$ we have
\begin{equation}\label{e2-8}
    \lim_{n\rightarrow\infty} \| \mathcal{K}^n [f] - \langle f,\mathbf{1}\rangle_{\pi}\mathbf{1}\|_{2,\pi}=0.
\end{equation}
\end{thm}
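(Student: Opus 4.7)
My plan is to reduce (\ref{e2-8}) to a Cauchy-sequence argument along the subsequence $n=2\nu m$, identify the resulting strong limit as a fixed point of $\mathcal{K}^{2\nu}$, and conclude with Proposition~\ref{p21}(3) that the fixed point must be the constant $\langle f,\mathbf{1}\rangle_{\pi}\mathbf{1}$.

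Since the transition kernel is normalised, $\mathcal{K}[\mathbf{1}]=\mathbf{1}$ and hence $\mathcal{K}^{r}[\mathbf{1}]=\mathbf{1}$ for every integer $r\geq 0$. Writing a general index as $n=2\nu m+r$ with $0\leq r<2\nu$ and using (\ref{e2-5}),
\[
\|\mathcal{K}^{n}[f]-\langle f,\mathbf{1}\rangle_{\pi}\mathbf{1}\|_{2,\pi}=\|\mathcal{K}^{r}\bigl(\mathcal{K}^{2\nu m}[f]-\langle f,\mathbf{1}\rangle_{\pi}\mathbf{1}\bigr)\|_{2,\pi}\leq \|\mathcal{K}^{2\nu m}[f]-\langle f,\mathbf{1}\rangle_{\pi}\mathbf{1}\|_{2,\pi},
\]
so (\ref{e2-8}) reduces to proving convergence along $T^{m}[f]$ with $T=\mathcal{K}^{2\nu}$.

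Set $c_{k}=\|\mathcal{K}^{k}[f]\|_{2,\pi}^{2}$. By (\ref{e2-4}) the sequence $(c_{k})$ is non-increasing and non-negative, hence converges to some $c\geq 0$. Self-adjointness of $T$ (Proposition~\ref{p21}(2)) yields
\[
\|T^{m}[f]-T^{n}[f]\|_{2,\pi}^{2}=\langle f,T^{2m}[f]\rangle_{\pi}-2\langle f,T^{m+n}[f]\rangle_{\pi}+\langle f,T^{2n}[f]\rangle_{\pi}=c_{2\nu m}-2\,c_{\nu(m+n)}+c_{2\nu n},
\]
whose right-hand side tends to $c-2c+c=0$ as $m,n\to\infty$. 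Thus $(T^{m}[f])$ is Cauchy in $L^{2}(\pi)$; let $u$ denote its limit. Continuity of $T$ gives $T[u]=u$. From (\ref{e2-3}) together with $\mathcal{H}_{p}$ one deduces $\mathring{\kappa}_{2\nu}>0$ a.e., so Proposition~\ref{p21}(3) applied with index $2\nu$ forces $u=\gamma\mathbf{1}$ for some constant $\gamma$. Taking the inner product with $\mathbf{1}$ and using $T[\mathbf{1}]=\mathbf{1}$, $\gamma=\lim_{m}\langle T^{m}[f],\mathbf{1}\rangle_{\pi}=\langle f,\mathbf{1}\rangle_{\pi}$, which is the desired identification.

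The delicate step I foresee is the Cauchy calculation above, where self-adjointness, the contraction bound (\ref{e2-4}), and the monotonicity of $(c_{k})$ must conspire to telescope $\|T^{m}f-T^{n}f\|^{2}$ into an expression built only from the convergent sequence $(c_{k})$. Once this Cauchy property is secured, the remainder is essentially a single application of Proposition~\ref{p21}(3) plus elementary bookkeeping; in particular, this route avoids any appeal to the spectral theorem for bounded self-adjoint operators and relies solely on the ingredients already developed in Section~2.
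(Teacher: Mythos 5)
Your proof is correct, and while its overall architecture matches the paper's (pass to the subsequence $n=2\nu m$, show it is Cauchy, identify the limit as a fixed point of $\mathcal{K}^{2\nu}$, invoke Proposition~\ref{p21}(3), then recover the full sequence), the way you obtain the Cauchy property is genuinely different and more elementary. The paper first proves that the differences $\mathcal{K}^{2\nu n}-\mathcal{K}^{2\nu n+2\nu p}$ are positive operators (Proposition~\ref{p22}(3)) and then applies the operator inequality $\|Tu\|^{2}\leq\|T\|\langle Tu,u\rangle$ (its inequality (\ref{e2-9}), proved via the generalized Cauchy--Schwarz inequality for positive operators) to conclude that $\|\mathcal{K}^{2\nu n}[f]-\mathcal{K}^{2\nu n+2\nu p}[f]\|_{2,\pi}^{2}$ is controlled by the decrements of the convergent scalar sequence $\langle\mathcal{K}^{2\nu n}[f],f\rangle_{\pi}$. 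You instead expand $\|T^{m}[f]-T^{n}[f]\|_{2,\pi}^{2}$ directly and use self-adjointness to write it as $c_{2\nu m}-2c_{\nu(m+n)}+c_{2\nu n}$ with $c_{k}=\|\mathcal{K}^{k}[f]\|_{2,\pi}^{2}$; since $(c_{k})$ is non-increasing by (\ref{e2-4}) and bounded below, all three terms tend to the same limit and the expression vanishes. Note that the two scalar sequences are in fact the same object, $\langle\mathcal{K}^{2k}[f],f\rangle_{\pi}=c_{k}$, so the real saving in your version is that it dispenses entirely with inequality (\ref{e2-9}) and with the positivity statements of Proposition~\ref{p22}; only self-adjointness and the contraction bound are needed. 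Your treatment of the remainders $n=2\nu m+r$ (pulling $\mathcal{K}^{r}$ out in front and using $\mathcal{K}[\mathbf{1}]=\mathbf{1}$ together with (\ref{e2-5})) is also a slight variant of the paper's, which instead applies the subsequence result to $\mathcal{K}^{m}[f]$; both are fine. The one point stated without proof in both arguments is that $\mathring{\kappa}_{\nu}>0$ a.e.\ implies $\mathring{\kappa}_{2\nu}>0$ a.e.; this follows from Fubini's theorem (for a.e.\ $(x,x')$ the integrand $z\mapsto\mathring{\kappa}_{\nu}(x\rightarrow z)\mathring{\kappa}_{\nu}(z\rightarrow x')$ is positive for a.e.\ $z$), and is equally implicit in the paper.
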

\begin{proof}
It is not difficult to find out that for any real Hilbert space $H$ with an inner product $\langle\cdot,\cdot\rangle$ and a norm
$\|u\| = \sqrt{\langle u,u\rangle}, u\in H$, with a given linear bounded self-adjoint positive operator $T:H\rightarrow H$ it holds the inequality
\begin{equation}\label{e2-9}
    \|Tu\|^2\leq\|T\|\langle Tu,u\rangle.
\end{equation}
The proof of (\ref{e2-9}) will be given at the end of the paper.
Choose arbitrary $f\in L^2(\pi)$. Applying (\ref{e2-9}) to the positive operators $(\mathcal{K}^{2\nu n} - \mathcal{K}^{2\nu n+2\nu p})$
for $n = 1,2,\ldots$ and $p = 1,2,\ldots$ we get
\begin{equation*}
    \| (\mathcal{K}^{2\nu n} - \mathcal{K}^{2\nu n+2\nu p})[f]\|_{2,\pi}^2 \leq \| \mathcal{K}^{2\nu n} - \mathcal{K}^{2\nu n+2\nu p} \| \langle\mathcal{K}^{2\nu n} [f] - \mathcal{K}^{2\nu n+2\nu p}[f],f\rangle_{\pi}
\end{equation*}
from which follows that
\begin{equation}\label{e2-10}
        \| \mathcal{K}^{2\nu n} [f] - \mathcal{K}^{2\nu n+2\nu p}[f]\|_{2,\pi}^2 \leq
        2\left( \langle\mathcal{K}^{2\nu n} [f],f\rangle_{\pi} - \langle\mathcal{K}^{2\nu n+2\nu p}[f],f\rangle_{\pi} \right).
\end{equation}
From Proposition \ref{p22} we know that the numerical sequence $(\langle\mathcal{K}^{2\nu n} [f],f\rangle_{\pi})_{n=1}^\infty$
is decreasing an bounded from below by zero therefore this sequence is convergent. Now from (\ref{e2-10}) it follows that the sequence of the
powers $(\mathcal{K}^{2\nu n} [f])_{n=1}^\infty$ is a Cauchy sequence in $L^2(\pi)$ therefore it has a limit $h\in L^2(\pi)$ for
which obviously it holds $\mathcal{K}^{2\nu}[h] = h$. From Proposition \ref{p21}(3) (with $n=2\nu$) we get
that $h(\cdot) = \gamma$ a.e. $(\lambda)$ in $\mathbb{X}$ with some constant $\gamma$ because
the stated positiveness of the sub-kernel $\mathring{\kappa}_{\nu}(\cdot\rightarrow \cdot)$ in {\color{blue} $\mathcal{H}_p$} provides that
\begin{equation*}
    \mathring{\kappa}_{2\nu}(x\rightarrow x') = \int \mathring{\kappa}_{\nu}(x\rightarrow z)\mathring{\kappa}_{\nu}(z\rightarrow x') \lambda(dz) > 0
\end{equation*}
a.e. $(\lambda\times\lambda)$ in $\mathbb{X}\times\mathbb{X}$.
We have $\mathcal{K}^{2\nu n} [f]\rightarrow \gamma\mathbf{1}$ whence
$\langle\mathcal{K}^{2\nu n} [f],\mathbf{1}\rangle_{\pi}\rightarrow \gamma \langle\mathbf{1},\mathbf{1}\rangle_{\pi} = \gamma$ which gives
\begin{equation*}
    \langle f,\mathbf{1}\rangle_{\pi} = \langle f,\mathcal{K}^{2\nu n}[\mathbf{1}]\rangle_{\pi} =
    \langle\mathcal{K}^{2\nu n} [f],\mathbf{1}\rangle_{\pi}\rightarrow \gamma
\end{equation*}
therefore $\gamma = \langle f,\mathbf{1}\rangle_{\pi}$ which proves (\ref{e2-8}) for the subsequence of the powers $(2\nu n)_{n=1}^\infty$, i.e. that
\begin{equation}\label{e2-11}
    \lim_{n\rightarrow\infty} \mathcal{K}^{2\nu n} [f] = \langle f,\mathbf{1}\rangle_{\pi}\mathbf{1}.
\end{equation}
From (\ref{e2-11}) we obtain
\begin{multline}\label{e2-12}
        \lim_{n\rightarrow\infty} \mathcal{K}^{2\nu n+m} [f] = \lim_{n\rightarrow\infty} \mathcal{K}^{2\nu n} \left[\mathcal{K}^{m}[f]\right]  \\
        = \langle \mathcal{K}^{m}[f],\mathbf{1}\rangle_{\pi}\mathbf{1} = \langle f,\mathcal{K}^{m}[\mathbf{1}]\rangle_{\pi}\mathbf{1}
        = \langle f,\mathbf{1}\rangle_{\pi}\mathbf{1}
\end{multline}
which proves (\ref{e2-8}) for the all the power subsequences $(2\nu n + m)_{n=1}^\infty$ where $m$, $1\leq m < 2\nu$, is a nonzero remainder
after a division by $2\nu$. Now it is not difficult to see that the validity of (\ref{e2-8}) follows from (\ref{e2-11}) and (\ref{e2-12}).
\end{proof}

\section{Convergence with respect to $TV$ distance}
Our main purpose is to investigate the behavior of the operator sequence $(\mathcal{\hat{K}}^n)$
rather than the sequence $(\mathcal{K}^n)$ where the transition operator $\mathcal{\hat{K}}$ is
defined in (\ref{e2-2}) because it actually corresponds to the Markov chain.
Note that if $\mu_1$ and $\mu_2$ are absolutely continuous probability measures (w.r.t. $\lambda$) with densities $f_1(\cdot)$ and $f_2(\cdot)$
then for the total variation distance $d_{TV}(\mu_1,\mu_2)$ it holds (see e.g. \cite{paper-2,paper-1})
\begin{equation*}
    d_{TV}(\mu_1,\mu_2) = \frac{1}{2} \int |f_1(x)-f_2(x)| \lambda(dx).
\end{equation*}

Let $L_\mathbb{X}^1$ be the Banach space of the measurable functions $f(\cdot):\mathbb{X}\rightarrow\mathbb{\bar{R}}$ for which
\begin{equation*}
    \|f\|_1 = \int |f(x)| \lambda(dx) < \infty
\end{equation*}
provided with the usual norm $\|\cdot\|_1$. We have
\begin{equation}\label{e3-1}
    \|f\|_1 \leq \|f/\pi\|_{2,\pi}
\end{equation}
because (by Cauchy-Schwarz inequality)
\begin{multline*}
    \|f\|_1 = \int |f(x)| \lambda(dx)  \\
    = \int \pi(x) \left|\frac{f(x)}{\pi(x)}\right| \lambda(dx) = \int \sqrt{\pi(x)} \left|\sqrt{\pi(x)}\frac{f(x)}{\pi(x)}\right| \lambda(dx)  \\
    \leq \sqrt{\int\pi(x)\lambda(dx)} \sqrt{\int\pi(x)\left|\frac{f(x)}{\pi(x)}\right|^2  \lambda(dx)} = \|f/\pi \|_{2,\pi}.
\end{multline*}

From (\ref{e3-1}) it follows that if $f/\pi\in L^2(\pi)$ then $f\in L_{\mathbb{X}}^1$.

\begin{prop}\label{p31}
Suppose {\color{blue} $\mathcal{H}$} and {\color{blue} $\mathcal{H}_p$} hold.
Let $f(\cdot)$ be a function such that $f/\pi\in L^2(\pi)$ and put $\gamma = \int f(x) \lambda(dx)$. Then
\begin{equation}\label{e3-2}
    \lim_{n\rightarrow\infty} \| \mathcal{\hat{K}}^n [f] - \gamma{\pi}\|_1 = 0.
\end{equation}
\end{prop}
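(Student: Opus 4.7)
The plan is to bridge the transition operator $\hat{\mathcal{K}}$ acting on densities with the conjugate operator $\mathcal{K}$ studied in Section 2, via the intertwining $\hat{\mathcal{K}}[f] = \pi \cdot \mathcal{K}[f/\pi]$ that comes straight from the detailed balance condition, and then invoke Theorem \ref{t21}.

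First I would set $g(\cdot) = f(\cdot)/\pi(\cdot)$, which is well-defined by the standing assumption $\pi>0$, and belongs to $L^2(\pi)$ by hypothesis. Using (\ref{e1-2}) I would compute
\begin{equation*}
\hat{\mathcal{K}}[f](x') = \int f(x)\kappa(x\rightarrow x')\lambda(dx) = \int \frac{f(x)}{\pi(x)}\,\pi(x)\kappa(x\rightarrow x')\lambda(dx) = \pi(x')\int g(x)\kappa(x'\rightarrow x)\lambda(dx) = \pi(x')\mathcal{K}[g](x').
\end{equation*}
Iterating this identity (or doing the same calculation with $\kappa_n$ in place of $\kappa$, noting that $\kappa_n$ also satisfies detailed balance as recorded just before Proposition \ref{p21}) gives $\hat{\mathcal{K}}^n[f] = \pi\cdot \mathcal{K}^n[g]$ for every $n\ge 1$.

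Next, I would identify the limit constant. Since $\pi$ is a probability density,
\begin{equation*}
\langle g,\mathbf{1}\rangle_{\pi} = \int \frac{f(x)}{\pi(x)}\,\pi(x)\lambda(dx) = \int f(x)\lambda(dx) = \gamma.
\end{equation*}
Applying Theorem \ref{t21} to $g\in L^2(\pi)$ under assumptions $\mathcal{H}$ and $\mathcal{H}_p$ yields
\begin{equation*}
\lim_{n\rightarrow\infty}\|\mathcal{K}^n[g] - \gamma\mathbf{1}\|_{2,\pi} = 0.
\end{equation*}

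The final step is to transfer this $L^2(\pi)$ convergence to $L^1_{\mathbb{X}}$ convergence of the quantity of interest. From the intertwining relation, $\hat{\mathcal{K}}^n[f] - \gamma\pi = \pi\bigl(\mathcal{K}^n[g] - \gamma\mathbf{1}\bigr)$, so with $F_n := \pi\bigl(\mathcal{K}^n[g]-\gamma\mathbf{1}\bigr)$ we have $F_n/\pi = \mathcal{K}^n[g]-\gamma\mathbf{1}\in L^2(\pi)$, and the inequality (\ref{e3-1}) gives
\begin{equation*}
\|\hat{\mathcal{K}}^n[f] - \gamma\pi\|_1 = \|F_n\|_1 \leq \|F_n/\pi\|_{2,\pi} = \|\mathcal{K}^n[g] - \gamma\mathbf{1}\|_{2,\pi} \longrightarrow 0,
\end{equation*}
which is exactly (\ref{e3-2}).

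Of the four steps, the only one with any subtlety is the intertwining identity $\hat{\mathcal{K}}^n[f] = \pi\cdot\mathcal{K}^n[f/\pi]$; the rest is bookkeeping. That identity is really where detailed balance does its work, and everything else is a one-line consequence of Theorem \ref{t21} together with the elementary norm comparison (\ref{e3-1}). I do not expect any serious obstacle — the machinery of Section 2 has already done the heavy lifting — so this proposition should amount to transferring the $L^2(\pi)$ convergence of $\mathcal{K}^n$ to $L^1$ convergence of $\hat{\mathcal{K}}^n$ by the change of variable $f=\pi g$.
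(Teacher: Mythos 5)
Your proposal is correct and follows essentially the same route as the paper: the intertwining identity $\mathcal{\hat{K}}^n[f]=\pi\,\mathcal{K}^n[f/\pi]$ via detailed balance, the identification $\langle f/\pi,\mathbf{1}\rangle_{\pi}=\gamma$, Theorem \ref{t21}, and the $L^1$--$L^2(\pi)$ comparison (which the paper re-derives inline by Cauchy--Schwarz rather than citing (\ref{e3-1}) as you do). No gaps.
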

\begin{proof}
First of all notice that (\ref{e3-1}) guarantees the existence of the constant $\gamma$.
It can be shown by induction that
\begin{equation*}
    \mathcal{\hat{K}}^n [f] (x') = \pi(x')\mathcal{K}^n \left[\frac{f}{\pi}\right](x'), n = 1,2,\ldots,
\end{equation*}
whence by means of the Cauchy-Schwarz inequality we obtain
\begin{multline}\label{e3-3}
    \| \mathcal{\hat{K}}^n [f] - \gamma{\pi}\|_1 = \int | \mathcal{\hat{K}}^n [f] (x') - \gamma\pi(x') | \lambda(dx')  \\
    = \int \pi(x') \left|\mathcal{K}^n \left[\frac{f}{\pi}\right](x') - \gamma\mathbf{1}(x') \right| \lambda(dx')  \\
    = \int \sqrt{\pi(x')} \left( \sqrt{\pi(x')} \left|\mathcal{K}^n \left[\frac{f}{\pi}\right](x') - \gamma\mathbf{1}(x') \right| \right) \lambda(dx')  \\
    \leq \sqrt{\int \pi(x') \left|\mathcal{K}^n \left[\frac{f}{\pi}\right](x') - \gamma\mathbf{1}(x') \right|^2 \lambda(dx')}
    = \left\| \mathcal{K}^n \left[\frac{f}{\pi}\right] - \gamma\mathbf{1} \right\|_{2,\pi}
\end{multline}
On the other hand by (\ref{e2-8}) it follows that
\begin{equation}\label{e3-4}
    \lim_{n\rightarrow\infty} \left\| \mathcal{K}^n \left[\frac{f}{\pi}\right] - \gamma\mathbf{1} \right\|_{2,\pi} = 0
\end{equation}
because
\begin{equation*}
    \left\langle \frac{f}{\pi},\mathbf{1}\right\rangle_{\pi} = \int \frac{f(x)}{\pi(x)} \pi(x) \lambda(dx) = \int f(x) \lambda(dx) = \gamma.
\end{equation*}
Now the validity of (\ref{e3-2}) follows immediately from (\ref{e3-3}) and (\ref{e3-4}).
\end{proof}

Hereafter we shall prepare for the final results. Put
\begin{equation*}
    \mathbb{X}_m = \left( x\in\mathbb{X} | \pi(x) \geq \frac{1}{m} \right), m =1,2,\ldots.
\end{equation*}
Obviously $\mathbb{X}_{m}\subseteq\mathbb{X}_{m+1}$ and $\mathbb{X}=\cup_{m=1}^\infty\mathbb{X}_m$.
For $f(\cdot):\mathbb{X}\rightarrow \bar{\mathbb{R}}$ put $f_{[m]}(x) = f(x)$ where $x\in\mathbb{X}_m$ and $f_{[m]}(x) = 0$ elsewhere.

The following proposition helps to prove Theorem \ref{t31}.
\begin{prop}\label{p33} Suppose {\color{blue} $\mathcal{H}$} holds. Then the following assertions are true. \newline
1) Let $f\in\ L_\mathbb{X}^1$. Then $\mathcal{\hat{K}}[f]\in\ L_\mathbb{X}^1$ and
\begin{equation}\label{e3-5}
    \| \mathcal{\hat{K}}[f] \|_1 \leq \|f\|_1
\end{equation}
consequently for any $f\in L_\mathbb{X}^1$ and $g\in L_\mathbb{X}^1$ and any $n=1,2,\ldots$ we have
\begin{equation}\label{e3-6}
    \| \mathcal{\hat{K}}^n[f] - \mathcal{\hat{K}}^n[g] \|_1 \leq \|f-g\|_1.
\end{equation}
2) Let $f\in L_\mathbb{X}^1$ be a bounded function. Then $f_{[m]}/\pi\in L^2(\pi)$ and
\begin{equation}\label{e3-7}
    \lim_{m\rightarrow\infty} \|f-f_{[m]}\|_1 = 0.
\end{equation}
3) Let $f\in L_\mathbb{X}^1$ and put
\begin{equation*}
    \gamma = \int f(x) \lambda(dx), \gamma_{[m]} = \int f_{[m]}(x) \lambda(dx), m = 1,2,\ldots.
\end{equation*}
Then
\begin{equation}\label{e3-8}
    \| \gamma_{[m]}\pi - \gamma\pi \|_1 \leq \|f-f_{[m]}\|_1.
\end{equation}
\end{prop}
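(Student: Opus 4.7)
The plan is to handle the three parts independently. Part (1) is a Fubini computation based on the fact that $\kappa(x\to\cdot)$ is a probability density, part (2) uses the defining inequality on $\mathbb{X}_m$ together with dominated convergence, and part (3) reduces to a one-line estimate on the scalar $\gamma - \gamma_{[m]}$.

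For (1), I would start from $|\mathcal{\hat{K}}[f](x')| \leq \int |f(x)| \kappa(x\to x')\lambda(dx)$, which holds because $\kappa(\cdot\to\cdot)\geq 0$. Integrating in $x'$ and applying Fubini to the nonnegative integrand gives
\begin{equation*}
\|\mathcal{\hat{K}}[f]\|_1 \leq \int |f(x)| \left(\int \kappa(x\to x')\lambda(dx')\right)\lambda(dx) = \int |f(x)|\lambda(dx) = \|f\|_1,
\end{equation*}
where the inner integral equals $1$ by the normalizing condition of the kernel. This simultaneously shows $\mathcal{\hat{K}}[f]\in L^1_{\mathbb{X}}$ and (\ref{e3-5}). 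The bound (\ref{e3-6}) follows at once by linearity: apply (\ref{e3-5}) repeatedly to $f-g$.

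For (2), suppose $|f(x)|\leq C$ a.e. On $\mathbb{X}_m$ we have $\pi(x)\geq 1/m$, so $1\leq m\pi$ on $\mathbb{X}_m$ and therefore $\lambda(\mathbb{X}_m) \leq m\int_{\mathbb{X}_m}\pi\,\lambda(dx)\leq m$. Consequently
\begin{equation*}
\int \left|\frac{f_{[m]}(x)}{\pi(x)}\right|^2 \pi(x)\lambda(dx) = \int_{\mathbb{X}_m}\frac{|f(x)|^2}{\pi(x)}\lambda(dx) \leq C^2 m \,\lambda(\mathbb{X}_m) \leq C^2 m^2 < \infty,
\end{equation*}
which gives $f_{[m]}/\pi \in L^2(\pi)$. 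For (\ref{e3-7}), write $f-f_{[m]} = f\,\mathbf{1}_{\mathbb{X}\setminus\mathbb{X}_m}$. Since $\pi(x)>0$ for every $x\in\mathbb{X}$ under $\mathcal{H}$, each fixed $x$ lies in $\mathbb{X}_m$ for $m$ sufficiently large, so $\mathbf{1}_{\mathbb{X}\setminus\mathbb{X}_m}(x)\to 0$ pointwise on $\mathbb{X}$. The dominating function $|f|\in L^1_{\mathbb{X}}$ is integrable, so dominated convergence yields $\|f-f_{[m]}\|_1\to 0$.

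For (3), observe that $\gamma-\gamma_{[m]} = \int (f(x)-f_{[m]}(x))\lambda(dx)$, whence $|\gamma-\gamma_{[m]}|\leq \|f-f_{[m]}\|_1$. Since $\int\pi\,d\lambda = 1$, we obtain
\begin{equation*}
\|\gamma_{[m]}\pi - \gamma\pi\|_1 = |\gamma_{[m]}-\gamma|\int \pi(x)\lambda(dx) = |\gamma_{[m]}-\gamma| \leq \|f-f_{[m]}\|_1,
\end{equation*}
which is (\ref{e3-8}). The only delicate step is the $L^2(\pi)$ bound in (2), where one must use both the lower bound $\pi\geq 1/m$ on $\mathbb{X}_m$ (to control $1/\pi$) and the integrability of $\pi$ (to control $\lambda(\mathbb{X}_m)$); the rest is routine.
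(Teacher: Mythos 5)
Your proposal is correct and follows essentially the same route as the paper: the Fubini computation with the normalizing condition for part (1), the pointwise bound on $\mathbb{X}_m$ via $\pi\geq 1/m$ for part (2), and the scalar estimate $|\gamma-\gamma_{[m]}|\leq\|f-f_{[m]}\|_1$ for part (3). The only cosmetic difference is in the $L^2(\pi)$ bound of part (2), where the paper bounds $|f_{[m]}/\pi|\leq Cm$ and integrates against $\pi$ directly, while you bound $|f|^2/\pi\leq C^2m$ and separately control $\lambda(\mathbb{X}_m)\leq m$ --- both yield the same constant $C^2m^2$ and both are valid.
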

\begin{proof} 1) We have
\begin{equation*}
    |\mathcal{\hat{K}}[f](x')| \leq \int |f(x)|\kappa(x\rightarrow x')\lambda(dx)
\end{equation*}
whence (again by means of the Fubini's theorem)
\begin{multline*}
    \| \mathcal{\hat{K}}[f] \|_1  = \int |\mathcal{\hat{K}}[f](x')|\lambda(dx')  \\
    \leq \int \left( \int |f(x)|\kappa(x\rightarrow x') \lambda(dx) \right) \lambda(dx')  = \int |f(x)|\lambda(dx) = \|f\|_1
\end{multline*}
which proves (\ref{e3-5}).
The validity of (\ref{e3-6}) follows immediately from (\ref{e3-5}) and the linearity of $\mathcal{\hat{K}}$. \newline
2) According to the assumption $f(\cdot)$ is bounded consequently for some constant $C$ it holds $|f_{[m]}(x)| \leq C$ for $x\in\mathbb{X}_m$.
Then
\begin{equation*}
    \left| \frac{f_{[m]}(x)}{\pi(x)}\right| \leq Cm, x\in\mathbb{X}_m,
\end{equation*}
therefore
\begin{equation*}
    \int \pi(x) \left| \frac{f_{[m]}(x)}{\pi(x)}\right|^2 \lambda(dx) \leq \int_{\mathbb{X}_m} \pi(x) \left| Cm\right|^2 \lambda(dx) \leq C^2 m^2 < \infty
\end{equation*}
which proves that $f_{[m]}/\pi\in L^2(\pi)$. By the definition
\begin{equation*}
    \|f-f_{[m]}\|_1 = \int |f(x) - f_{[m]}(x)| \lambda(dx) = \int_{\mathbb{X}\backslash\mathbb{X}_m} |f(x)| \lambda(dx)
\end{equation*}
which proves (\ref{e3-7}) because $\mathbb{X}_m\nearrow\mathbb{X}$. \newline
3) We have
\begin{multline*}
    \| \gamma_{[m]}\pi - \gamma\pi\|_1 = \int | \gamma_{[m]}\pi(x') - \gamma\pi(x') | \lambda(dx')   \\
    = |\gamma_{[m]} - \gamma| \int\pi(x')\lambda(dx') = \left| \int_{\mathbb{X}\backslash\mathbb{X}_m} f(x) \lambda(dx)\right|
    \leq \| f - f_{[m]}\|_1
\end{multline*}
which proves (\ref{e3-8}).
\end{proof}

We are ready to give more general conditions under which (\ref{e3-2}) is valid.

\begin{thm}\label{t31} Suppose {\color{blue} $\mathcal{H}$} and {\color{blue} $\mathcal{H}_p$} hold.
Let $f\in L_{\mathbb{X}}^1$ and put $\gamma = \int f(x) \lambda(dx)$. Then
\begin{equation}\label{e3-9}
    \lim_{n\rightarrow\infty} \| \mathcal{\hat{K}}^n [f] - \gamma{\pi}\|_1 = 0.
\end{equation}
Therefore if $f\in L_{\mathbb{X}}^1$ is a probability density function (w.r.t. $\lambda$) on $\mathbb{X}$ then
\begin{equation}\label{e3-10}
    \lim_{n\rightarrow\infty} d_{TV}( \mu[ \mathcal{\hat{K}}^n [f] ] , \mu[\pi] ) = 0.
\end{equation}
\end{thm}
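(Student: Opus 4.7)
The idea is to reduce Theorem \ref{t31} to Proposition \ref{p31} via a two-stage approximation of $f\in L_{\mathbb{X}}^1$, combined with the $L^1$-contractivity of $\mathcal{\hat{K}}$ recorded in Proposition \ref{p33}(1). Fix an arbitrary $\epsilon>0$ and set $\gamma=\int f(x)\,\lambda(dx)$.

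First I would approximate $f$ in $L^1$ by a \emph{bounded} function. Truncating the values of $f$ at some level $C$ yields $\tilde{f}(x):=\mathrm{sign}(f(x))\min(|f(x)|,C)$; by dominated convergence $\|f-\tilde{f}\|_1\to 0$ as $C\to\infty$, so I can select $C$ with $\|f-\tilde{f}\|_1<\epsilon/4$. Writing $\tilde{\gamma}=\int\tilde{f}\,d\lambda$, the probability-density property of $\pi$ gives $\|\tilde{\gamma}\pi-\gamma\pi\|_1=|\tilde{\gamma}-\gamma|\leq\|f-\tilde{f}\|_1<\epsilon/4$. Since $\tilde{f}$ is bounded, Proposition \ref{p33}(2) furnishes an $m$ with $\|\tilde{f}-\tilde{f}_{[m]}\|_1<\epsilon/4$ and $\tilde{f}_{[m]}/\pi\in L^2(\pi)$; Proposition \ref{p33}(3) then yields $\|\tilde{\gamma}_{[m]}\pi-\tilde{\gamma}\pi\|_1<\epsilon/4$. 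Because $\tilde{f}_{[m]}/\pi\in L^2(\pi)$, Proposition \ref{p31} applies and produces an $N$ such that $\|\mathcal{\hat{K}}^n[\tilde{f}_{[m]}]-\tilde{\gamma}_{[m]}\pi\|_1<\epsilon/4$ for every $n\geq N$.

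Putting it all together via the triangle inequality, and applying the contraction (\ref{e3-6}) to $f-\tilde{f}$ and to $\tilde{f}-\tilde{f}_{[m]}$, I would bound for $n\geq N$
\[
\|\mathcal{\hat{K}}^n[f]-\gamma\pi\|_1 \leq \|f-\tilde{f}\|_1 + \|\tilde{f}-\tilde{f}_{[m]}\|_1 + \|\mathcal{\hat{K}}^n[\tilde{f}_{[m]}]-\tilde{\gamma}_{[m]}\pi\|_1 + \|\tilde{\gamma}_{[m]}\pi-\tilde{\gamma}\pi\|_1 + \|\tilde{\gamma}\pi-\gamma\pi\|_1 < \epsilon,
\]
which gives (\ref{e3-9}) since $\epsilon$ was arbitrary. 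The total-variation conclusion (\ref{e3-10}) is then immediate: for a probability density $f$ one has $\gamma=1$, and $d_{TV}(\mu[g_1],\mu[g_2])=\tfrac{1}{2}\|g_1-g_2\|_1$.

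I do not anticipate a genuine obstacle; the only subtlety worth stressing is why \emph{both} truncations are required. Proposition \ref{p31} needs $f/\pi\in L^2(\pi)$, which demands simultaneously that the values of $f$ be controlled and that one work on a sublevel set $\mathbb{X}_m$ where $\pi$ is bounded away from zero. The value-truncation $f\mapsto\tilde{f}$ and the domain-truncation $\tilde{f}\mapsto\tilde{f}_{[m]}$ address these two requirements in turn, and neither step alone would suffice.
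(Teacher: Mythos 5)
Your proof is correct and follows essentially the same route as the paper: a value truncation to reduce to bounded $f$, a domain truncation $\tilde{f}\mapsto\tilde{f}_{[m]}$ to land in $L^2(\pi)$ so that Proposition \ref{p31} applies, and the contraction (\ref{e3-6}) to push the approximation errors through $\mathcal{\hat{K}}^n$ — the paper merely organizes the two truncations as two separate passes (bounded case first via (\ref{e3-11}), then general $f$) instead of one five-term chain. The only blemish is arithmetic: your final display has five terms each bounded by $\epsilon/4$, so the sum is $5\epsilon/4$ rather than $\epsilon$; take $\epsilon/5$ throughout (or note that $5\epsilon/4\to 0$ anyway).
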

\begin{proof} In the beginning of this proof we shall assume that the function $f\in L_{\mathbb{X}}^1$ is bounded. Put again
\begin{equation*}
    \gamma_{[m]} = \int f_{[m]}(x) \lambda(dx), m=1,2,\ldots.
\end{equation*}
For any $n=1,2,\ldots$ and $m=1,2,\ldots$ we can write
\begin{equation}\label{e3-11}
    \| \mathcal{\hat{K}}^n [f] - \gamma\pi \|_1 \leq \| \mathcal{\hat{K}}^n [f] - \mathcal{\hat{K}}^n [f_{[m]}] \|_1 + \\
    \| \mathcal{\hat{K}}^n [f_{[m]}] - \gamma_{[m]}\pi \|_1 + \| \gamma_{[m]}\pi - \gamma\pi\|_1
\end{equation}
Choose some $\varepsilon>0$. By (\ref{e3-7}) fix an integer $m\geq 1$ such that $\| f-f_{[m]}\|_1<\varepsilon/3$.
Then according to (\ref{e3-6}) we obtain
\begin{equation}\label{e3-12}
    \| \mathcal{\hat{K}}^n [f] - \mathcal{\hat{K}}^n [f_{[m]}] \|_1 < \frac{\varepsilon}{3}
\end{equation}
for any $n=1,2,\ldots$ and according to (\ref{e3-8}) we obtain
\begin{equation}\label{e3-13}
    \| \gamma_{[m]}\pi - \gamma\pi\|_1 < \frac{\varepsilon}{3}.
\end{equation}
For such a fixed $m$ we have from Proposition \ref{p33}(2) that $f_{[m]}/\pi\in L^2(\pi)$ therefore by Proposition \ref{p31} we get that
\begin{equation*}
    \lim_{n\rightarrow\infty} \|\mathcal{\hat{K}}^n[f_{[m]}] - \gamma_{[m]} \pi \|_1 = 0
\end{equation*}
consequently we can choose an positive integer $n_0$ such that
\begin{equation}\label{e3-14}
    \|\mathcal{\hat{K}}^n[f_{[m]}] - \gamma_{[m]} \pi \|_1 < \frac{\varepsilon}{3}
\end{equation}
for any $n>n_0$. Replacing the inequalities (\ref{e3-12}), (\ref{e3-13}) and (\ref{e3-14}) in (\ref{e3-11}) we receive that
\begin{equation*}
    \| \mathcal{\hat{K}}^n [f] - \gamma\pi \|_1 < \frac{\varepsilon}{3} + \frac{\varepsilon}{3} + \frac{\varepsilon}{3} = \varepsilon
\end{equation*}
for any $n>n_0$ which by definition proves the validity of (\ref{e3-9}) for the case of bounded $f\in L_{\mathbb{X}}^1$.

Choose now arbitrary $f\in L_{\mathbb{X}}^1$ and put $f_m(x)=f(x)$ where $|f(x)|\leq m$ and $f_m(x)=0$ elsewhere, $m=1,2,\ldots$.
In the same way as for (\ref{e3-11}) one can get
\begin{equation*}
    \| \mathcal{\hat{K}}^n [f] - \gamma\pi \|_1 \leq \| \mathcal{\hat{K}}^n [f_m] - \gamma_m\pi \|_1 + 2\| f_m - f \|_1
\end{equation*}
where
\begin{equation*}
    \gamma_m = \int f_m(x)\lambda(dx).
\end{equation*}
From the first part of the proof we already know that for arbitrary fixed positive integer $m$ it holds
\begin{equation*}
    \lim_{n\rightarrow\infty} \| \mathcal{\hat{K}}^n [f_m] - \gamma_m\pi \|_1 = 0
\end{equation*}
because $f_m(\cdot)$ is bounded.
Now the validity of (\ref{e3-9}) is a consequence of the well-known fact that
\begin{equation*}
    \lim_{m\rightarrow\infty}\| f_m - f \|_1=0.
\end{equation*}

The validity of (\ref{e3-10}) follows immediately from (\ref{e3-9}).
\end{proof}

Theorem \ref{t31} allows us to prove the following practically valuable result with easily verifiable conditions.

\begin{coroll}\label{coroll42}
Suppose {\color{blue} $\mathcal{H}$} holds.
Let $q(x|x')>0$
a.e. $(\lambda\times\lambda)$ in $\mathbb{X}\times\mathbb{X}$.
Then (\ref{e1-3}) is valid for every initial density $f_{0}(\cdot)\in L_{\mathbb{X}}^1$.
\end{coroll}
\begin{proof}
By conditions we have $\pi(x)>0$ for all $x\in\mathbb{X}$ and $q(\cdot|\cdot)>0$ a.e. $(\lambda\times\lambda)$ in $\mathbb{X}\times\mathbb{X}$.
These inequalities guarantee that $\mathring{\kappa}(\cdot\rightarrow \cdot)>0$ a.e. $(\lambda\times\lambda)$ in $\mathbb{X}\times\mathbb{X}$
therefore we can use {\color{blue} $\mathcal{H}_p$} with $\nu = 1$. At this point we can apply Theorem \ref{t31} which
implies consequently the validity of (\ref{e3-9}), (\ref{e3-10}) and (\ref{e1-3}).
\end{proof}

Now we can conclude that the condition (\ref{ppc}) (besides the irreducibility shown for example in \cite{basic-3}) provides also the aperiodicity of the chain, because it is a necessary convergence condition for the irreducible chains (as it follows from Theorem 5.4.4 \cite{advanced-2}).

\section{Remarks}
In this paper we offer a possible way to prove the total variance convergence of MH-chains, under the kernel positivity condition, by means referred more to the
functional analysis rather than to the classical probability constructions. The offered method is backgrounded by some schemes described in \cite{origin-3} and does not utilize
the notions of the irreducibility and aperiodicity. However it has an essential drawback, because the overall proof scheme lays heavily on the detailed balance
condition property of the MH-chains. Thus our approach basically cannot apply to MCMC algorithms
which do not fulfill the detailed balance condition.

It is not difficult to prove that the Kullback-Leibler divergence
between the current MH-chain density $f_{(n)}(\cdot)$ and the target distribution $\pi(\cdot)$
do not increase as $n\to\infty$. The same is valid for the $d_{TV} (\mu[f_{(n)}],\mu[\pi])$, which is shown for example
in \cite{advanced-2} (Proposition 13.3.2). The latter can be considered as a good testimony for the validity of (\ref{e1-3}) at all under some natural requirements.

For the sake of completeness let us prove the validity of (\ref{e2-9}). The operator $T$ is linear bounded self-adjoint and positive
in the real Hilbert space $H$ therefore it holds the Cauchy-like inequality
\begin{equation*}
    \left|\langle Tu,v\rangle\right|^2 \leq \langle Tu,u\rangle \langle Tv,v\rangle
\end{equation*}
for all $u\in H$ and all $v\in H$. Putting in the latter $v = Tu$ we obtain
\begin{equation}\label{e5-1}
    \left|\langle Tu,Tu\rangle\right|^2 = \|Tu\|^4 \leq \langle Tu,u\rangle \langle T^2u,Tu\rangle.
\end{equation}
Now applying the classical Cauchy inequality and the inequality for the norm we get
\begin{equation*}
    \langle T^2u,Tu\rangle\leq \|T^2 u\| \|Tu\| \leq \|T\| \|Tu\| \|Tu\| = \|T\| \|Tu\|^2.
\end{equation*}
Replacing the latter in (\ref{e5-1}) we get the inequality
\begin{equation*}
    \|Tu\|^4 \leq \langle Tu,u\rangle \|T\| \|Tu\|^2
\end{equation*}
that is equivalent to $\|Tu\|^2 \leq \|T\| \langle Tu,u\rangle$, i.e. (\ref{e2-9}). Perhaps various proofs of (\ref{e2-9})
can be found in other places but we present here an explicit proof taking into account the importance of
this inequality in our construction.

The operator $\mathcal{\hat{K}}$ considered in (\ref{e2-1}) represents an interesting nontrivial example for a self-adjoint operator.
Also a careful inspection of the proofs in the discrete case leads to considering of a very interesting example of positive definite matrices
of the type $A=(\min (\alpha_i,\alpha_j))$ where $(\alpha_k)$ are mutually different positive numbers.

Finally note that in the general case (\ref{ppc}) is the only easily verifiable condition which provides the assumption {\color{blue} $\mathcal{H}_p$}.

\section*{Acknowledgements}
The authors are grateful to the referee for the constructive remarks on this work.

\end{document}